\documentclass[a4paper, 12pt]{amsart}
\usepackage[mathscr]{eucal}
\usepackage{amssymb}
\usepackage{amsthm}
\usepackage{amsfonts}
\usepackage{xcolor}
\usepackage[breaklinks=true,colorlinks=true,linkcolor=blue,citecolor=red,urlcolor=green]{hyperref}
\usepackage{setspace}

\usepackage[margin=1in,footskip=0.25in]{geometry}

\usepackage{color}

\numberwithin{equation}{section}

\usepackage{lineno}
\usepackage{graphicx}
\allowdisplaybreaks
\usepackage{hyperref}
\hypersetup{
colorlinks=true,
urlcolor=black,
citecolor=blue}

\theoremstyle{plain}
\newtheorem{theorem}{Theorem}[section]
\newtheorem{proposition}[theorem]{Proposition}

\newtheorem{corollary}[theorem]{Corollary}

\theoremstyle{definition}

\newtheorem{example}[theorem]{Example}

\theoremstyle{remark}
\newtheorem{remark}[theorem]{Remark}

\usepackage{amsmath,amssymb}
\usepackage{booktabs}
\usepackage{tabularx}
\usepackage{array}
\numberwithin{equation}{section}
\usepackage{makecell}

\newcolumntype{C}[1]{>{\centering\arraybackslash}m{#1}}
\newcolumntype{L}[1]{>{\raggedright\arraybackslash}m{#1}}

\begin{document}

\title[Weakly Einstein and weakly $\eta$-Einstein structures]{
Weakly Einstein and weakly $\eta$-Einstein structures on almost contact metric three-manifolds}

\author[S. H. Chun]{Sun Hyang Chun}
\address{Sun Hyang Chun \\ Department of Mathematics \\ Chosun University \\ Gwangju 61452, Korea}
\email{shchun@chosun.ac.kr}
\thanks{S. H. Chun was supported by research funds from Chosun University, 2026.}

\author[Y. Euh]{Yunhee Euh}
\address{Yunhee Euh \\ Department of Mathematics \\ Sungkyunkwan University \\ Suwon 16419, Korea}
\email{yunhee.euh@gmail.com}
\thanks{Y. Euh  was supported by Basic Science Research Program through the National Research Foundation of Korea (NRF) funded by the Ministry of Education(Grant No. RS-2023-00244736).}

\subjclass[2020]{Primary 53C25; Secondary 53D15, 53C30}
\keywords{almost contact metric manifolds, contact metric manifolds, weakly Einstein structure, weakly $\eta$-Einstein structure, Ricci operator}

\begin{abstract}
We study weakly Einstein and weakly $\eta$-Einstein structures on
almost contact metric three-manifolds. We first obtain a pointwise
structural description of the Ricci operator $Q$ on a weakly
$\eta$-Einstein almost contact metric three-manifold. More precisely,
at each point, either $Q$ has the $\eta$-Einstein form, $Q\xi=0$, or
the metric is weakly Einstein. We then prove that if a contact metric
three-manifold satisfies $Q\xi=0$, then either $Q=0$ or
$\operatorname{rank}Q=1$. Hence, such a metric is either flat or
weakly Einstein but not Einstein. We also construct examples showing
that the contact metric assumption is essential: there exist almost
contact metric three-manifolds satisfying $Q\xi=0$ whose Ricci
operators have rank two. Finally, we classify simply connected
homogeneous contact metric three-manifolds that are weakly Einstein.
\end{abstract}

\maketitle

\section{Introduction}\label{sec:intro}

Almost contact metric manifolds are Riemannian manifolds endowed with
structure tensors $(\varphi,\xi,\eta,g)$ satisfying identities compatible with the metric. The defining identities of an almost contact metric structure imply the orthogonal decomposition
$$  TM=D\oplus \mathbb R\xi,\qquad D=\ker\eta.$$ 
Thus, on such a manifold, curvature tensors may be studied together with the distribution $D$ and the line field $\mathbb{R}\xi$ generated by the characteristic vector field $\xi$.

A Riemannian manifold $(M^m,g)$ is called \textit{weakly Einstein} if
$$  \breve R=\frac{|R|^2}{m}g,$$
where $  \breve R(X,Y)=\sum_{i,j,k=1}^m  R(X,e_i,e_j,e_k)R(Y,e_i,e_j,e_k)$ for a local orthonormal frame $\{e_i\}$. This notion was introduced in the study of curvature identities on four-dimensional Riemannian manifolds \cite{EPS2013}. In dimension three, the Weyl tensor vanishes identically, and hence the Riemannian curvature tensor is determined by the Ricci tensor. Therefore the weakly Einstein equation can be expressed
in terms of the Ricci operator $Q$. More precisely, a  three-dimensional Riemannian manifold is weakly Einstein if and only if, at each point, $Q$ is a scalar multiple of the identity, or $Q$ has rank one in the
 non-Einstein case \cite{GHV18}.

Recent work on low-dimensional weakly Einstein geometry shows that the weakly Einstein condition is particularly effective in classification and rigidity problems. Classification and rigidity results have been obtained for four-dimensional homogeneous weakly Einstein manifolds \cite{AMK15}, locally conformally flat weakly Einstein manifolds \cite{GHV18,MV23}, weakly Einstein Kähler surfaces \cite{DEKP25}, weakly Einstein Lie groups \cite{EKNP25}, and weakly Einstein submanifolds and hypersurfaces in space forms \cite{KP24, KNP26}. In almost contact metric geometry, weakly Einstein metrics have also been investigated for Sasakian manifolds, contact $(\kappa,\mu)$-spaces, and almost cosymplectic manifolds \cite{Chen2020}. These results indicate that the weakly Einstein condition is flexible enough to admit non-Einstein examples, but still rigid enough to yield classification results in special geometric settings.

In \cite{CCE22},  Cho, Chun, and Euh  introduced the notion of a weakly $\eta$-Einstein structure on an almost contact metric manifold, defined by the condition
\begin{equation*}
\breve{R}=\alpha g+\beta\,\eta\otimes\eta,
\end{equation*}
where $\alpha$ and $\beta$ are smooth functions. In subsequent work, we investigated weakly $\eta$-Einstein homogeneous contact metric three-manifolds and several classes of weakly $\eta$-Einstein almost contact metric three-manifolds, respectively \cite{CE23,CE25}.

In this paper, we  study weakly Einstein and weakly $\eta$-Einstein almost contact metric three-manifolds by using the Ricci operator. A basic point is that a local $\varphi$-orthonormal frame does not necessarily diagonalize the full Ricci operator. Although $Q$ is symmetric on $TM$, it need not
preserve $D=\ker\eta$. Hence one cannot assume that $Q$ is diagonal with respect to a $\varphi$-basis. Instead, one first diagonalizes
$$  Q_D=\pi_D\circ Q|_D,$$
where $\pi_D$ denotes the orthogonal projection from $TM$ onto $D$.
The Ricci components involving the $\xi$-direction remain in the
calculation, and these components are necessary for the weakly
$\eta$-Einstein equation. This gives a precise way to compare the
weakly Einstein equation with the weakly $\eta$-Einstein equation.

The paper is organized as follows. Section~\ref{sec:preliminaries} recalls basic facts on almost contact metric manifolds, weakly Einstein metrics, weakly
$\eta$-Einstein metrics, and the curvature identity in dimension three. We also recall the Ricci-operator characterization of
weakly Einstein metrics in dimension three. Section~\ref{sec:weakly-eta} proves a pointwise result for weakly $\eta$-Einstein almost contact
metric three-manifolds.  At each point, one of the following holds: the Ricci operator has the
$\eta$-Einstein form, $Q\xi=0$, or the metric is weakly Einstein at that point. Section~\ref{sec:weakly-einstein-acm} studies the non-Einstein weakly Einstein case. Since the Ricci operator has rank one, there is a unique one-dimensional eigenspace corresponding to the nonzero Ricci eigenvalue. We describe the possible forms of $Q$ according to whether this eigendirection lies in $D=\ker \eta$, lies along $\mathbb R\xi$, or has nonzero components in both directions. Section~\ref{sec:contact} proves that, on a  contact metric three-manifold, the condition $Q\xi=0$ implies $\operatorname{rank}Q\leq 1$. Consequently, the metric is weakly Einstein. Section~\ref{sec:examples} gives examples showing that the conclusion of Section~\ref{sec:contact} 
does not hold without the contact metric assumption.  We construct an
almost contact metric manifold satisfying $Q\xi=0$ and
$\operatorname{rank}Q=1$, and another one satisfying $Q\xi=0$ and
$\operatorname{rank}Q=2$. Finally, Section~\ref{sec:homogeneous} determines all simply connected
 homogeneous contact metric three-manifolds that are weakly Einstein. 

\section{Preliminaries}\label{sec:preliminaries}

Throughout this paper, all vector fields are assumed to be smooth unless otherwise stated. Let $M^{2n+1}$ be a smooth manifold. An almost contact metric structure on $M$ is a quadruple $(\varphi,\xi,\eta,g)$, where $\varphi$ is a tensor field of type $(1,1)$, $\xi$ is a vector field, $\eta$ is a one-form, and $g$ is a Riemannian metric satisfying $$ \varphi^2=-I+\eta\otimes\xi,\qquad \eta(\xi)=1,\qquad g(\varphi X,\varphi Y)=g(X,Y)-\eta(X)\eta(Y) $$ for all vector fields $X,Y$ on $M$ \cite{Blair2010}. These identities imply $$ \varphi\xi=0,\qquad \eta\circ\varphi=0,\qquad \eta(X)=g(X,\xi). $$

An almost contact metric manifold $M$ such that 
\begin{equation}
    \label{eq:contact}
    d\eta(X, Y)=g(X, \varphi Y)
\end{equation}
is called a contact metric manifold, where $d$ is the exterior derivative. Given a contact metric manifold, we define the structure operator $h$ by $h=\frac{1}{2}\mathcal{L}_{\xi}\varphi$, where $\mathcal{L}_\xi$ is the Lie derivative in the direction of $\xi$. Then we easily see that $h$ is symmetric and satisfies the following conditions:
\begin{equation}
    \label{eq:structure_h}
    h\xi=0,\qquad h\varphi=-\varphi h, \qquad \nabla_X\xi=-\varphi X-\varphi h X.
\end{equation}
We denote by $D=\ker\eta$ the horizontal distribution, or equivalently the $\eta$-distribution.
In contact metric manifolds, $D$ is called the contact distribution.  In dimension three, $D$ has rank two, and locally one may choose a $\varphi$-orthonormal frame $ \{e_1,e_2=\varphi e_1,\xi\}.$

Our convention for the Riemannian curvature tensor is $$ R(X,Y)Z = \nabla_X\nabla_Y Z -\nabla_Y\nabla_X Z -\nabla_{[X,Y]}Z, $$ and the associated $(0,4)$-curvature tensor is defined by $$ R(X,Y,Z,W)=g(R(X,Y)Z,W). $$
The Ricci tensor and the scalar curvature are denoted by $\rho$ and $r$,
respectively.  The Ricci operator $Q$ is defined by
$$
  g(QX,Y)=\rho(X,Y).
$$
Since the Weyl tensor vanishes identically in dimension three, the
curvature tensor of a three-dimensional Riemannian manifold is determined
by the Ricci tensor and the scalar curvature.  With the above convention,
one has
\begin{equation}
\begin{aligned}\label{eq:3d-curvature}
R(X,Y,Z,W)
=\,& \rho(Y,Z)g(X,W)-\rho(X,Z)g(Y,W) \\
&+g(Y,Z)\rho(X,W)-g(X,Z)\rho(Y,W) \\
&-\frac r2
\{g(Y,Z)g(X,W)-g(X,Z)g(Y,W)\}.
\end{aligned}
\end{equation}

For a Riemannian manifold $(M^m,g)$, define a symmetric $(0,2)$-tensor
$\breve R$ by
$$
  \breve R(X,Y)
  =
  \sum_{i,j,k=1}^m
  R(X,e_i,e_j,e_k)R(Y,e_i,e_j,e_k),
$$
where $\{e_i\}_{i=1}^m$ is a local orthonormal frame.  This definition is
independent of the choice of the orthonormal frame.  The manifold is
called \emph{weakly Einstein} if  its curvature tensor satisfies
$$
  \breve R=\frac{|R|^2}{m}g.
$$
Equivalently, $\breve R=\alpha g$ for some smooth function $\alpha$.
Indeed, tracing this equation gives $\alpha=|R|^2/m$.

An almost contact metric manifold $(M^{2n+1},\varphi,\xi,\eta,g)$ is
called \emph{weakly $\eta$-Einstein} if there exist smooth functions
$\alpha$ and $\beta$ such that
\begin{equation}
     \label{eq:wee}
  \breve R=\alpha g+\beta\,\eta\otimes\eta.
\end{equation}
Equivalently,
\begin{equation*}
 \breve R(X,Y)=\alpha g(X,Y),\qquad
  \breve R(X,\xi)=0
\end{equation*}
for all $X,Y\in D=\ker\eta$, and
$$
  \breve R(\xi,\xi)=\alpha+\beta.
$$
In particular, every weakly Einstein almost contact metric manifold is
weakly $\eta$-Einstein with $\beta=0$.

An almost contact metric manifold is called \emph{$\eta$-Einstein} if its Ricci tensor satisfies
$$
  \rho=\overline{\alpha}g+\overline{\beta}\,\eta\otimes\eta
$$
for some smooth functions $\overline{\alpha}$ and $\overline{\beta}$.
In dimension three, this is equivalent to saying that, with respect to a
local $\varphi$-orthonormal frame
$\{e_1,e_2=\varphi e_1,\xi\}$, the Ricci operator has the form
$$
  Qe_1=\overline{\alpha} e_1,\qquad
  Qe_2=\overline{\alpha} e_2,\qquad
  Q\xi=(\overline{\alpha}+\overline{\beta})\xi.
$$
Equivalently, $Q|_D=\overline{\alpha}I_D$, and $Q$ has no mixed
components between $D$ and $\mathbb R\xi$.

\begin{proposition}
    \label{rem:eta-implies-weakly-eta}{\cite{CCE22}}
Every $\eta$-Einstein almost contact metric three-manifold
is weakly $\eta$-Einstein.
\end{proposition}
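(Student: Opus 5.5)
The plan is to express $\breve R$ entirely through the Ricci operator using the three-dimensional curvature identity \eqref{eq:3d-curvature}, and then to insert the $\eta$-Einstein form of $Q$. Since $R$ is an explicit bilinear expression in $\rho$, $g$ and $r$, a direct contraction (or a computation in a frame diagonalizing $Q$) should give the standard three-dimensional formula
\begin{equation*}
\breve R = 4\,\rho^{2} - 4r\,\rho + \bigl(2|\rho|^2 - r^2\bigr)g
\end{equation*}
up to the exact constants, where $\rho^2(X,Y)=g(QX,QY)$. What matters is only that $\breve R(X,Y)=g(P(Q)X,Y)$ for a polynomial $P$ of degree at most two whose coefficients are scalar functions built from $r$ and $|\rho|^2$.

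To derive this cleanly, I will work pointwise in an orthonormal eigenframe $\{u_1,u_2,u_3\}$ of $Q$ with eigenvalues $\lambda_1,\lambda_2,\lambda_3$. By \eqref{eq:3d-curvature}, the only nonzero components are $R(u_i,u_j,u_j,u_i)$ with $i\ne j$, and each equals $\lambda_i+\lambda_j-\tfrac r2=\tfrac r2-\lambda_k$, where $k$ is the third index. Hence $\breve R(u_i,u_j)=0$ for $i\neq j$, and $\breve R(u_i,u_i)=4\sum_{j\ne i}(\tfrac r2-\lambda_k)^2$, a symmetric quadratic expression in the eigenvalues. This shows that $\breve R$ is diagonal in every eigenframe of $Q$, with diagonal entries $f(\lambda_i)$ for a single quadratic polynomial $f$ whose coefficients depend only on $r$ and $\sum\lambda_j^2$. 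Therefore $\breve R=g(f(Q)\cdot,\cdot)$.

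Now assume $\rho=\overline\alpha g+\overline\beta\,\eta\otimes\eta$. Then $Q=\overline\alpha I+\overline\beta\,\eta\otimes\xi$, and $\{e_1,e_2,\xi\}$ is an eigenframe with eigenvalues $\overline\alpha,\overline\alpha,\overline\alpha+\overline\beta$. Consequently $f(Q)$ acts as $f(\overline\alpha)$ on $D$ and as $f(\overline\alpha+\overline\beta)$ on $\xi$, with no mixed terms. This gives
\begin{equation*}
\breve R=f(\overline\alpha)\,g+\bigl(f(\overline\alpha+\overline\beta)-f(\overline\alpha)\bigr)\eta\otimes\eta .
\end{equation*}
This is exactly \eqref{eq:wee}, with $\alpha=f(\overline\alpha)$ and $\beta=f(\overline\alpha+\overline\beta)-f(\overline\alpha)$. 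Both are smooth, since they are polynomials in $\overline\alpha$ and $\overline\beta$.

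The main obstacle is establishing carefully that $\breve R$ is a polynomial in $Q$, so that it commutes with $Q$ and shares its eigenspaces. Pointwise this follows from the eigenframe computation above. Smoothness of $\alpha$ and $\beta$ does not require a smooth eigenframe, because $f$ has coefficients that are smooth functions of $r$ and $|\rho|^2$, and these are themselves polynomials in $\overline\alpha$ and $\overline\beta$.
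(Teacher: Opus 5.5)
Your argument is correct and is essentially the route the paper has in mind: the paper only cites this result, but it follows by substituting $Q=\overline{\alpha}I+\overline{\beta}\,\eta\otimes\xi$ into the formula \eqref{eq:3-d_breve}, which is quadratic in $Q$, and your eigenframe computation establishes exactly that polynomial structure. Your constants are off: the correct expression is $\breve R=(2|\rho|^2-r^2)g+2r\rho-2g(Q\cdot,Q\cdot)$, and the diagonal entries are $2\sum_{j\ne i}(\tfrac r2-\lambda_k)^2$ rather than $4\sum_{j\ne i}(\tfrac r2-\lambda_k)^2$. As you note, only the polynomial structure is used, so this does not affect the proof.
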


Using the three-dimensional curvature identity \eqref{eq:3d-curvature},
a direct computation gives
\begin{equation}\label{eq:3-d_breve}
  \breve R(X,Y)
  =
  (2|\rho|^2-r^2)g(X,Y)+2r\rho(X,Y)-2g(QX,QY).
\end{equation}
Now let $\{E_1,E_2,E_3\}$ be a local orthonormal frame consisting of
eigenvectors of $Q$, and write $QE_i=\lambda_iE_i$.  Then
\eqref{eq:3-d_breve} gives
$$
  \breve R(E_i,E_i)
  =
  2|\rho|^2-r^2+2r\lambda_i-2\lambda_i^2.
$$
Hence, for mutually distinct indices $i,j,k\in\{1,2,3\}$,
\begin{equation}\label{eq:breve-difference}
  \breve R(E_i,E_i)-\breve R(E_j,E_j)
  =
  2(\lambda_i-\lambda_j)\lambda_k.  
\end{equation}

\begin{proposition}[Lemma~5 in \cite{GHV18}]
\label{prop:3d-we-spectrum}
Let $(M^3,g)$ be a three-dimensional Riemannian manifold.  Then
$(M^3,g)$ is weakly Einstein if and only if, at each point, the Ricci
operator $Q$ is either a scalar multiple of the identity or has rank one.  Equivalently, the eigenvalues of $Q$ are
either
$$
  (\lambda,\lambda,\lambda),\qquad \lambda\in\mathbb R,
$$
or, up to permutation,
$$
  (\lambda,0,0),\qquad \lambda\neq0.
$$
\end{proposition}

\section{Weakly $\eta$-Einstein almost contact metric three-manifolds}
\label{sec:weakly-eta}

In this section, we analyze the weakly $\eta$-Einstein condition in
dimension three in terms of the Ricci operator. With respect to the
orthogonal decomposition $TM = D\oplus \mathbb R\xi$, $D=\ker\eta$, the Ricci operator need not preserve $D$. Thus we consider the projected operator
$  Q_D=\pi_D\circ Q|_D $
on $D$. The following result is pointwise. It describes the role of the mixed
Ricci components between $D$ and $\mathbb R\xi$ in the weakly
$\eta$-Einstein equation.

\begin{theorem}\label{thm:trichotomy}
Let $(M^3,\varphi,\xi,\eta,g)$ be an almost contact metric
three-manifold. Suppose that $M$ is weakly $\eta$-Einstein. Then, at each point $p\in M$, at least one of the following holds.
\begin{enumerate}
\item[\rm(i)] the Ricci operator has the $\eta$-Einstein form ($Q=\overline{\alpha}I+\overline{\beta}\,\eta\otimes\xi$);
\item[\rm(ii)] $Q\xi=0$;
\item[\rm(iii)] the metric is weakly Einstein.
\end{enumerate}
\end{theorem}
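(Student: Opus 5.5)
The argument is pointwise linear algebra on the formula \eqref{eq:3-d_breve}. Write $A=rI-Q$, so that
\[
\breve R=(2|\rho|^2-r^2)g+2g((rI-Q)Q\,\cdot,\cdot).
\]
The weakly $\eta$-Einstein condition \eqref{eq:wee} then says that the symmetric operator $S=(rI-Q)Q=rQ-Q^2$ has the form $cI+b\,\eta\otimes\xi$. Equivalently, $S$ preserves $D$ and $\mathbb R\xi$, and $S|_D$ is a multiple of the identity on $D$. It is cleanest to work in an orthonormal eigenbasis of $Q$ at $p$, so that $QE_i=\lambda_iE_i$ and $SE_i=\mu_iE_i$ with $\mu_i=r\lambda_i-\lambda_i^2=\lambda_i(\lambda_j+\lambda_k)$. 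By \eqref{eq:breve-difference}, $\mu_i-\mu_j=(\lambda_i-\lambda_j)\lambda_k$.

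The plan is to split according to the eigenvalue structure of $S$.

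\textbf{Case 1: $S$ is a scalar multiple of the identity.} Then $\breve R$ is proportional to $g$ at $p$, which is exactly (iii).

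\textbf{Case 2: $S$ is not a scalar.} Since $S=cI+b\,\eta\otimes\xi$ with $b\neq0$, $S$ has exactly two distinct eigenvalues. The eigenspaces are $D$ (eigenvalue $c$) and $\mathbb R\xi$ (eigenvalue $c+b$). In particular $\xi$ is an eigenvector of $S$, spanning a one-dimensional eigenspace. Relabel the $Q$-eigenbasis so that $\mu_1=\mu_2=c$ and $\mu_3=c+b$. Then $E_3$ spans the $\mu_3$-eigenspace, so $E_3=\pm\xi$ and $D=\mathrm{span}\{E_1,E_2\}$.

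From $\mu_1=\mu_2$ we get $(\lambda_1-\lambda_2)\lambda_3=0$.
\begin{itemize}
\item If $\lambda_1=\lambda_2$, then $Q$ acts on $D$ as $\lambda_1 I_D$ and $Q\xi=\lambda_3\xi$. This is the $\eta$-Einstein form (i), with $\overline\alpha=\lambda_1$ and $\overline\beta=\lambda_3-\lambda_1$.
\item Otherwise $\lambda_3=0$, so $Q\xi=0$, which is (ii).
\end{itemize}

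The only delicate point is the step in Case 2 asserting that the $S$-eigenspaces are exactly $D$ and $\mathbb R\xi$, and that a $Q$-eigenbasis can be chosen adapted to them. The eigenspaces follow directly from $b\neq 0$ and the form $cI+b\,\eta\otimes\xi$. The adapted basis exists because $S$ is a polynomial in $Q$: $Q$ commutes with $S$ and therefore preserves each eigenspace of $S$. Hence $Q$ preserves $\mathbb R\xi$ and $D$, and we may diagonalize $Q|_D$ inside $D$. This is what removes the mixed components that Section~\ref{sec:weakly-eta} warns about. It also justifies the relabeling, since $\xi$ becomes a $Q$-eigenvector, so no separate argument is needed for it.
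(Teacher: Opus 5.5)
Your proof is correct, but it follows a different route from the paper's.

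\textbf{The paper's route.} The paper fixes a $\varphi$-orthonormal frame that diagonalizes only $Q_D=\pi_D\circ Q|_D$. It keeps the mixed Ricci entries $a=\rho(e_1,\xi)$ and $b=\rho(e_2,\xi)$ and writes out all six components of $\breve R$ from \eqref{eq:3-d_breve}. It then splits on the off-diagonal equations $ab=0$, $\lambda_2a=0$, $\lambda_1b=0$. If $a=b=0$, the horizontal equation $\lambda_3(\lambda_1-\lambda_2)=a^2-b^2$ gives (i) or (ii). If exactly one of $a,b$ is nonzero, a horizontal eigenvalue of $Q_D$ is forced to vanish. A direct comparison then gives $\breve R(e_2,e_2)=\breve R(\xi,\xi)$, hence $\beta=0$ and (iii).

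\textbf{Your route.} You note that, modulo a multiple of $g$, $\breve R$ is represented by $S=rQ-Q^2$, which is a polynomial in $Q$. You split on whether $\beta(p)=0$ rather than on whether the mixed components vanish. When $\beta(p)\neq0$, the eigenspaces of $S$ are exactly $D$ and $\mathbb R\xi$. Since $Q$ commutes with $S$, it preserves both. So the mixed components the paper must track vanish without computation, and \eqref{eq:breve-difference} finishes the argument.

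\textbf{What each buys.} Your argument is shorter and essentially frame-free. It also gives a slightly sharper formulation: at any point where the metric is not weakly Einstein, $\xi$ is necessarily a Ricci eigenvector. The paper's explicit computation instead shows concretely how nonzero mixed components force a zero horizontal eigenvalue and $\beta=0$. This matches the normal form in Theorem~\ref{thm:rank-one-we}(iii).

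Your final commutation paragraph is not actually needed. The relabeling step already works for an arbitrary $Q$-eigenbasis, because $SE_i=\mu_iE_i$ automatically.
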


\begin{proof}
Fix a point $p\in M$. All tensors, functions, and equations in the
argument below are evaluated at $p$, unless otherwise stated. Since $Q$ is symmetric,
the operator
$$
  Q_D=\pi_D\circ Q|_{D_{p}}
$$
is symmetric on $D_p$, where $\pi_D:T_pM\to D_p$ denotes the orthogonal
projection.  Hence we may choose a local $\varphi$-orthonormal frame $\{e_1,e_2=\varphi e_1,\xi\}$
such that $Q_D$ is diagonal at $p$.  With respect to this frame, the
Ricci operator has the form
$$
  Q=
  \begin{pmatrix}
  \lambda_1 & 0 & a \\
  0 & \lambda_2 & b \\
  a & b & \lambda_3
  \end{pmatrix}.
$$
Using \eqref{eq:wee} and \eqref{eq:3-d_breve}, we obtain
\begin{equation}\label{eq:weakly-eta-expanded}
  (2|\rho|^2-r^2)g(X,Y)+2r\rho(X,Y)-2g(QX,QY)
  =
  \alpha g(X,Y)+\beta\eta(X)\eta(Y).
\end{equation}
Evaluating \eqref{eq:weakly-eta-expanded} with respect to the above
frame, the diagonal components give
\begin{align}
  &\breve R(e_1,e_1)=\lambda_1^2+(\lambda_2-\lambda_3)^2+2a^2+4b^2
  =\alpha, \label{eq:diag1}\\
  &\breve R(e_2,e_2)=(\lambda_1-\lambda_3)^2+\lambda_2^2+4a^2+2b^2
  =\alpha, \label{eq:diag2}\\
  &\breve R(\xi,\xi)=(\lambda_1-\lambda_2)^2+\lambda_3^2+2a^2+2b^2
  =\alpha+\beta. \label{eq:diag3}
\end{align}
The off-diagonal components give
\begin{equation}\label{eq:off-diagonal}
  \breve{R}(e_1,e_2)=-2ab=0,\qquad \breve{R}(e_1,\xi)=2\lambda_2a=0,\qquad \breve{R}(e_2,\xi)=2\lambda_1b=0.
\end{equation}
Moreover, from \eqref{eq:diag1} and  \eqref{eq:diag2}, we have
\begin{equation}\label{eq:horizontal}
  \lambda_3(\lambda_1-\lambda_2)-a^2+b^2=0.
\end{equation}
From \eqref{eq:off-diagonal}, we can consider the following cases:

\smallskip
\noindent\textbf{Case I.} Suppose that $a=b=0$.  Then $\xi$ is an
eigenvector of $Q$ and from \eqref{eq:horizontal}, we obtain 
$$
  \lambda_3(\lambda_1-\lambda_2)=0.
$$
If $\lambda_1=\lambda_2$, then, for every $X\in T_pM$,
$$
  QX=\lambda_1X+(\lambda_3-\lambda_1)\eta(X)\xi.
$$
Thus the Ricci operator has the $\eta$-Einstein form at $p$. If $\lambda_3=0$, then $Q\xi=0$ at $p$.

\smallskip
\noindent\textbf{Case II.} Suppose that exactly one of $a$ and $b$ is
nonzero.

First assume that $a=0$ and $b\neq0$. From
\eqref{eq:off-diagonal}, we have $\lambda_1=0$. Hence, evaluating
\eqref{eq:3-d_breve} at $(e_2,e_2)$ and at $(\xi,\xi)$, we obtain
$$
  \breve R(e_2,e_2)
  =
  \lambda_2^2+\lambda_3^2+2b^2,
  \qquad
  \breve R(\xi,\xi)
  =
  \lambda_2^2+\lambda_3^2+2b^2.
$$
Thus
$
  \breve R(e_2,e_2)=\breve R(\xi,\xi).
$
On the other hand, since $e_1,e_2\in D=\ker\eta$, the weakly
$\eta$-Einstein equation gives
$
  \breve R(e_1,e_1)=\breve R(e_2,e_2).
$
Consequently,
$$
  \breve R(e_1,e_1)=\breve R(e_2,e_2)=\breve R(\xi,\xi).
$$
Hence $\beta=0$. Therefore $\breve R=\alpha g$. Taking the trace
gives $\alpha=|R|^2/3$, and so the metric is weakly Einstein at $p$. 
The case $b=0$ and $a\neq0$ is also treated similarly. 
\end{proof}

\begin{remark}\label{rem:comparison-CE23}
Theorem~\ref{thm:trichotomy} is a refinement of
\cite[Theorem~3.1]{CE23}.  There, it was shown that a weakly
$\eta$-Einstein almost contact metric three-manifold either has Ricci
operator of the $\eta$-Einstein form or satisfies $Q\xi=0$.
The main point of the present theorem is that, on an  almost
contact metric three-manifold, a local $\varphi$-orthonormal frame
need not  diagonalize the Ricci operator $Q$.
Thus, we diagonalize only the horizontal part
$
Q_D=\pi_D\circ Q|_D
$
and keep the mixed Ricci components between $D=\ker\eta$ and
$\mathbb R\xi$. This yields the third case stated
in Theorem~\ref{thm:trichotomy}.
\end{remark}

\section{Weakly Einstein almost contact metric three-manifolds}
\label{sec:weakly-einstein-acm}
We now analyze the weakly Einstein case stated in Theorem~\ref{thm:trichotomy}.
It is known that  a non-Einstein
weakly Einstein metric has the Ricci operator of rank one in dimension three \cite{GHV18}. Therefore,  the nonzero Ricci eigenvalue determines a unique one-dimensional
Ricci eigenspace.  The almost contact metric structure is reflected in the position of this eigenspace relative
to the orthogonal decomposition
$$
  TM=D\oplus\mathbb R\xi.
$$

\begin{theorem}
\label{thm:rank-one-we}
Let $M=(M^3,\varphi,\xi,\eta,g)$ be an almost contact metric
three-manifold. Suppose that  $M$ is weakly Einstein but not Einstein on an open subset $\mathcal{U}$. Then, locally on $\mathcal{U}$, there exist a nonzero smooth function $\lambda$ and a unit vector field $V$ such that
$$
  QX=\lambda g(X,V)V
$$
for all vector fields $X$ on $\mathcal{U}$.  According to the position of $V$ relative to
 $D=\ker\eta$ and $\mathbb R\xi$, one of the following cases
occurs.
\begin{itemize}
\item[\rm(i)]  If $V\in D$, then there exists a local $\varphi$-orthonormal frame $\{e_1,e_2=\varphi e_1,\xi\}$ such that
$$
  Qe_1=\lambda e_1,\qquad
  Qe_2=0,\qquad
  Q\xi=0.
$$

\item[\rm(ii)]  If $V\in\mathbb R\xi$, then $
  QX=\lambda\eta(X)\xi.
$
Thus the metric is $\eta$-Einstein.

\item[\rm(iii)]  If $V\notin D$ and $V\notin\mathbb R\xi$, then with respect to a
local $\varphi$-orthonormal frame $\{e_1,e_2=\varphi e_1,\xi\}$, one can
write
$$
  V=\sin\theta\,e_1+\cos\theta\,\xi,\qquad
  0<|\sin\theta|<1.
$$
Moreover, 
$$
  Q=
  \begin{pmatrix}
  \lambda\sin^2\theta & 0 & \lambda\sin\theta\cos\theta \\
  0 & 0 & 0 \\
  \lambda\sin\theta\cos\theta & 0 & \lambda\cos^2\theta
  \end{pmatrix},
$$
with respect to this frame.
\end{itemize}
\end{theorem}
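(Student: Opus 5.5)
The plan is to start from Proposition~\ref{prop:3d-we-spectrum}. On $\mathcal U$ the metric is weakly Einstein but not Einstein, so at each point $Q$ is not a multiple of the identity. Hence its eigenvalues are $(\lambda,0,0)$ with $\lambda\neq0$. In particular $\lambda=\operatorname{tr}Q=r$ is a smooth nonvanishing function on $\mathcal U$. The $\lambda$-eigenspace is one-dimensional, namely $\ker(Q-rI)$. Since $Q-rI$ has constant rank two, this kernel is a smooth line bundle; one could also note $\operatorname{im}Q=\ker(Q-rI)$ with $Q^2=rQ$. Shrinking to a neighbourhood where this line bundle is trivial, we choose a smooth unit section $V$. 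Symmetry of $Q$ forces $V^\perp=\ker Q$, so $QX=\lambda g(X,V)V$. This gives the first assertion.

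For the case analysis, we work pointwise and then argue locally on the open sets where each case holds.

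\textbf{Case (ii).} If $V\in\mathbb R\xi$, then $V=\pm\xi$ and $QX=\lambda\eta(X)\xi$. This is of the form $\overline\alpha g+\overline\beta\eta\otimes\eta$ with $\overline\alpha=0$ and $\overline\beta=\lambda$, so the metric is $\eta$-Einstein.

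\textbf{Case (i).} If $V\in D$, we set $e_1=V$ and $e_2=\varphi e_1$. Then $\{e_1,e_2,\xi\}$ is a $\varphi$-orthonormal frame, because $\varphi$ restricted to $D$ is an isometry with $\varphi^2=-I$ there and $\eta\circ\varphi=0$. Reading off $QX=\lambda g(X,e_1)e_1$ gives $Qe_1=\lambda e_1$ and $Qe_2=Q\xi=0$.

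\textbf{Case (iii).} If $V$ lies in neither $D$ nor $\mathbb R\xi$, we write $V=V_D+\eta(V)\xi$, where $V_D=\pi_D V\neq0$ and $\eta(V)\neq0$. We set $e_1=V_D/|V_D|$ and $e_2=\varphi e_1$, and define $\theta$ by $\sin\theta=|V_D|$ and $\cos\theta=\eta(V)$. This is consistent because $|V_D|^2+\eta(V)^2=1$, and then $0<|\sin\theta|<1$. Smoothness holds wherever $V_D\neq0$. With $V=\sin\theta\,e_1+\cos\theta\,\xi$, the matrix of $Q$ is $\lambda$ times the matrix of $g(\cdot,V)V$ in the frame $(e_1,e_2,\xi)$. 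Its entries are $V^iV^j$, which gives exactly the stated matrix, with vanishing second row and column.

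The only genuinely delicate step is the smoothness of $\lambda$ and $V$, that is, showing that the rank-one eigenline varies smoothly. I would handle it via the constant-rank argument above, or more concretely as follows. At a point, pick a vector $X_0$ with $QX_0\neq0$. Then $V=QX/|QX|$ for any smooth local extension $X$ of $X_0$. The sign ambiguity of $V$ is harmless, since $Q$ depends on $V\otimes V$. The remaining cases are purely algebraic.
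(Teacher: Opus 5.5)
Your proposal is correct and follows the paper's proof essentially step by step. You use the same ingredients: rank one from Proposition~\ref{prop:3d-we-spectrum}, the formula $QX=\lambda g(X,V)V$ from symmetry, and the same choice of $\varphi$-frame in the three cases ($e_1=V$, $V=\pm\xi$, or $e_1$ along $\pi_D V$). Your extra justification of smoothness, via $\lambda=r$ and the constant-rank kernel of $Q-rI$ or via $V=QX/|QX|$, fills in a point the paper only asserts.
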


\begin{proof}
Since the metric is weakly Einstein and not Einstein, by
Proposition~\ref{prop:3d-we-spectrum} we see that $Q$ has rank one on
the open subset $\mathcal{U}$.  Since $Q$ is symmetric and has rank one, it determines a unique
nonzero Ricci eigenspace and a two-dimensional kernel.  Hence locally there
exist a nonzero smooth function $\lambda$ and a local unit eigenvector
field $V$ such that
$$
  QV=\lambda V.
$$
The kernel of $Q$ is $V^\perp$, and therefore
$$
  QX=\lambda g(X,V)V
$$
for every vector field $X$ on $\mathcal{U}$.

We compare $V$ with $D=\ker\eta$ and $\mathbb R\xi$.  If
$V\in D$, choose $e_1=V$ and $e_2=\varphi e_1$.  Then
$\{e_1,e_2,\xi\}$ is a local $\varphi$-orthonormal frame.  Since
$e_2\perp V$ and $\xi\perp V$, we get
$$
  Qe_1=\lambda e_1,\qquad
  Qe_2=0,\qquad
  Q\xi=0.
$$

If $V\in\mathbb R\xi$ is a unit vector, then $V=\pm\xi$. Hence, $$ QX=\lambda g(X,\xi)\xi =\lambda\eta(X)\xi. $$  Therefore the metric is $\eta$-Einstein.

Finally, suppose that $V\notin D$ and $V\notin\mathbb R\xi$.  Then the
projection of $V$ onto $D$ is nonzero.  Choose a local unit vector field
$e_1\in D$ in the direction of this projection, and put
$e_2=\varphi e_1$.  Then $\{e_1,e_2,\xi\}$ is a local
$\varphi$-orthonormal frame, and
$$
  V=\sin\theta\,e_1+\cos\theta\,\xi
$$
for some smooth function $\theta$ with $0<|\sin\theta|<1$.  Substitution
into $QX=\lambda g(X,V)V$ gives
\begin{equation*}
    \begin{aligned}
        &Qe_1=\lambda\sin^2\theta\,e_1+
       \lambda\sin\theta\cos\theta\,\xi,\\
        &Qe_2=0,\\
        &Q\xi=\lambda\sin\theta\cos\theta\,e_1+
       \lambda\cos^2\theta\,\xi. 
    \end{aligned}
\end{equation*}
The above cases cover all possible positions of $V$ relative to
$D$ and $\mathbb R\xi$.
\end{proof}
\begin{remark}
In Theorem~\ref{thm:rank-one-we}, the third case is the only case in
which the weakly Einstein condition is compatible with $Q\xi\neq0$
while $\xi$ is not an eigenvector of $Q$. Thus, even though the
Ricci operator has rank one, the decomposition
$$
  TM=D\oplus\mathbb R\xi
$$
need not agree with the Ricci eigenspace decomposition. In particular, in the third case the metric is neither $\eta$-Einstein nor satisfies $Q\xi=0$.
\end{remark}

\section{Contact metric three-manifolds with $Q\xi=0$}
\label{sec:contact}
In this section, we prove that the condition $Q\xi=0$ has a much stronger consequence for contact metric three-manifolds than for almost contact metric three-manifolds.  On an almost contact metric three-manifold, the equation $Q\xi=0$ only says that the characteristic
vector field $\xi$ lies in the kernel of $Q$; it imposes no direct
restriction on the Ricci curvature along $D=\ker\eta$.  On a contact
metric manifold, however, the structure equations relate the horizontal
distribution $D$ to the Reeb direction $\mathbb R\xi$.  These
relations force the horizontal Ricci components to degenerate under the
assumption $Q\xi=0$.  Consequently, the Ricci operator has rank at most
one, and the metric is weakly Einstein.

\begin{theorem}
\label{thm:contact-Qxi-rigidity}
Let $M=(M^3,\varphi,\xi,\eta,g)$ be a contact metric three-manifold.  If {$M$ satisfies}
$Q\xi=0,$
then 
$$\operatorname{rank}Q\le 1.$$
Consequently, either the metric is flat, or it is weakly Einstein but not Einstein. 
\end{theorem}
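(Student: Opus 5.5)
The plan is to work in a local $\varphi$-orthonormal frame adapted to $h$. I will use the standard contact metric identity $\rho(\xi,\xi)=2-\operatorname{tr}h^2$ together with the structure equations \eqref{eq:structure_h}. The goal is to show that $Q$ kills not only $\xi$ but also a second, horizontal direction. Once $\operatorname{rank}Q\le1$ is established, the conclusion follows quickly. If $Q=0$, the metric is flat, since in dimension three \eqref{eq:3d-curvature} shows that $R$ is determined by $\rho$. If $\operatorname{rank}Q=1$, Proposition~\ref{prop:3d-we-spectrum} shows the metric is weakly Einstein. It is not Einstein, because the eigenvalues are $(\lambda,0,0)$ with $\lambda\neq0$.

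\textbf{Step 1: $h$ has constant eigenvalues $\pm1$.} Since $h$ is symmetric, anticommutes with $\varphi$ and satisfies $h\xi=0$, I will choose (on the open dense set where $h\neq0$, handled by continuity) a $\varphi$-orthonormal frame $\{e_1,e_2=\varphi e_1,\xi\}$ with $he_1=\mu e_1$, $he_2=-\mu e_2$ and $\mu\ge0$. Then $\operatorname{tr}h^2=2\mu^2$, so $\rho(\xi,\xi)=2-2\mu^2$. The hypothesis $Q\xi=0$ gives $\rho(\xi,\xi)=0$, hence $\mu=1$ everywhere. In particular $h\neq0$ everywhere and the frame is globally well defined locally. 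From $\nabla_X\xi=-\varphi X-\varphi hX$ we then get
$$\nabla_{e_1}\xi=-2e_2,\qquad \nabla_{e_2}\xi=0,\qquad \nabla_\xi\xi=0.$$

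\textbf{Step 2: connection functions and the curvature of $e_2$.} I will write the remaining Levi-Civita data through functions:
$$\nabla_{e_1}e_1=p\,e_2,\qquad \nabla_{e_2}e_2=q\,e_1,\qquad \nabla_\xi e_1=s\,e_2.$$
All other covariant derivatives are then forced by metric compatibility and the formulas for $\nabla\xi$. The resulting brackets are $[e_1,e_2]$, $[e_2,\xi]$ and $[\xi,e_1]$. The contact condition $d\eta(e_1,e_2)=g(e_1,\varphi e_2)$ serves as a consistency check.

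Next I will compute $R(e_2,\xi)\xi$, $R(e_1,e_2)e_2$ and the mixed terms $R(e_1,\xi)e_2$ and $R(e_2,e_1)\xi$. This gives all the Ricci components:
$$\rho(\xi,e_1),\quad \rho(\xi,e_2),\quad \rho(e_1,e_1),\quad \rho(e_2,e_2),\quad \rho(e_1,e_2).$$
The equations $\rho(\xi,e_1)=\rho(\xi,e_2)=0$ coming from $Q\xi=0$ should yield first-order relations among $p$, $q$, $s$ and their derivatives along the frame. One of these relations should be $q=0$ or a constraint on $\xi(q)$.

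\textbf{Step 3: the main obstacle.} Since $Q\xi=0$ and $Q$ is symmetric, $Q$ preserves $D$ and $\operatorname{rank}Q=\operatorname{rank}Q_D$. So it suffices to show $\det Q_D=0$. I expect something stronger, namely $Qe_2=0$, i.e. $\rho(e_2,e_2)=\rho(e_1,e_2)=0$. This is plausible because $\nabla_{e_2}\xi=0$ makes $e_2$ a degenerate direction for the $\xi$-geometry.

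I will first use the three-dimensional identity \eqref{eq:3d-curvature} written in sectional curvatures. With $K_{ij}$ the sectional curvatures of the frame planes, it reads
$$\rho(\xi,\xi)=K(e_1,\xi)+K(e_2,\xi)=0,\qquad \rho(e_2,e_2)=K(e_1,e_2)+K(e_2,\xi).$$
I will then try to show $K(e_1,e_2)=-K(e_2,\xi)$ using the relations obtained in Step 2, and $\rho(e_1,e_2)=0$ from the mixed curvature term. If the direct computation of $\rho(e_2,e_2)$ does not close, my fallback is to apply the second Bianchi identity in its contracted form $\operatorname{div}Q=\tfrac12 dr$ to the vanishing $Q\xi=0$. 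This gives extra derivative constraints that should force $\det Q_D=0$.

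The delicate part is exactly this step: extracting enough relations from $\rho(\xi,e_i)=0$ to kill one horizontal eigenvalue. This is where I expect most of the computation to lie.
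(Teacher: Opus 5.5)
\textbf{Verdict: genuine gap.} Your Steps 1 and 2 follow the paper's route. The paper also uses a $\varphi$-frame adapted to $h$, gets the $h$-eigenvalues $\pm1$ from $\rho(\xi,\xi)=2-\operatorname{tr}h^2=0$, and then imposes $\rho(e_i,\xi)=0$. However, your proposal stops exactly where the proof lies, and the target you set in Step 3 is false: $Qe_2=0$ does not hold in general.

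With your normalization $he_1=e_1$ and your functions $p,q,s$, the brackets are $[e_1,e_2]=-p\,e_1+q\,e_2+2\xi$, $[e_1,\xi]=-(s+2)e_2$ and $[e_2,\xi]=s\,e_1$. The curvature along $\xi$ is
\[
R(e_1,e_2)\xi=2q\,e_1-2p\,e_2,\qquad R(e_1,\xi)\xi=-2s\,e_1,\qquad R(e_2,\xi)\xi=2s\,e_2 .
\]
This gives $\rho(e_1,\xi)=2p$ and $\rho(e_2,\xi)=2q$. So $Q\xi=0$ forces $p=q=0$ directly; it is algebraic, and no derivative constraints are involved. With $p=q=0$ you get $R(e_1,e_2)e_2=-2\nabla_\xi e_2=2s\,e_1$ and $\rho(e_1,e_2)=g(R(e_2,\xi)\xi,e_1)=0$. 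The frame sectional curvatures are therefore $K(e_1,e_2)=2s=K(e_2,\xi)$ and $K(e_1,\xi)=-2s$. Hence $\rho(e_1,e_1)=0$ and $\rho(e_2,e_2)=4s$, that is, $Q=\operatorname{diag}(0,4s,0)$. The horizontal kernel is spanned by $e_1$, the $+1$-eigenvector of $h$, not by $e_2$. This is the paper's case $a+b=-2$, with $a=-(s+2)$ and $\rho(e_2,e_2)=-8-4a=4s$.

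Your intended identity $K(e_1,e_2)=-K(e_2,\xi)$ therefore fails whenever $s\neq0$. For example, take the left-invariant contact metric structure on $SU(2)$ with $[e_1,e_2]=2\xi$, $[e_2,\xi]=s\,e_1$, $[\xi,e_1]=(s+2)e_2$ and $s>0$. It has $Q\xi=0$ but $Qe_2=4s\,e_2\neq0$. The heuristic that $\nabla_{e_2}\xi=0$ makes $e_2$ the degenerate direction points the wrong way. The fallback via $\operatorname{div}Q=\tfrac12dr$ is unspecified and unnecessary.

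As written, the proposal does not prove $\operatorname{rank}Q\le1$. If you replace Step 3 by the computation above, the rank bound follows, and your concluding argument goes through: the metric is flat if $Q=0$, and weakly Einstein but not Einstein if $\operatorname{rank}Q=1$.
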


\begin{proof}
Let $\mathcal{U}$ be a local open subset of $M$.  Choose a local $\varphi$-orthonormal frame
$
\{e_1,e_2=\varphi e_1,\xi\}
$
on $\mathcal{U}$ which is adapted to $h=\frac12\mathcal L_\xi\varphi$. 
Using \eqref{eq:contact} and \eqref{eq:structure_h},
the Lie brackets with respect to this frame have the form 
$$
[e_1,e_2]=2\xi+c e_1+d e_2,\qquad
[e_1,\xi]=a e_2,\qquad
[e_2,\xi]=b e_1,
$$
where $a,b,c,d$ are local smooth functions on $\mathcal{U}$.
Using the Koszul formula,
we obtain
\begin{equation*}
\begin{alignedat}{3}
\nabla_{e_1}e_1&=-c e_2,&\qquad
\nabla_{e_1}e_2&=c e_1+\frac{2-a-b}{2}\xi,&\qquad
\nabla_{e_1}\xi&=\frac{a+b-2}{2}e_2,\\[4pt]
\nabla_{e_2}e_1&=-d e_2-\frac{2+a+b}{2}\xi,&\qquad
\nabla_{e_2}e_2&=d e_1,&\qquad
\nabla_{e_2}\xi&=\frac{2+a+b}{2}e_1,\\[4pt]
\nabla_{\xi}e_1&=\frac{b-a-2}{2}e_2,&\qquad
\nabla_{\xi}e_2&=\frac{2+a-b}{2}e_1,&\qquad
\nabla_{\xi}\xi&=0.
 \end{alignedat}
\end{equation*}
A direct computation gives
\begin{equation*}
    \begin{aligned}
    &\rho(e_1,e_1)=\,e_1(d)-e_2(c)-c^2-d^2-2-2a+\frac{b^2-a^2}{2},\\
    &\rho(e_2,e_2)=\,
e_1(d)-e_2(c)-c^2-d^2-2+2b+\frac{a^2-b^2}{2},\\
&\rho(e_1,e_2)=\,-\frac12\xi(a+b),\\
&\rho(e_1,\xi)=(a+b)c+\frac12 e_2(a+b),\\
&\rho(e_2,\xi)=-(a+b)d+\frac12 e_1(a+b),\\
&\rho(\xi,\xi)=\,2-\frac12(a+b)^2.
    \end{aligned}
\end{equation*}
Now assume that $Q\xi=0$. Then
we have
$
\rho(X,\xi)=0
$
for every vector field $X$.  In particular,
$$
0=\rho(\xi,\xi)=2-\frac12(a+b)^2.
$$
Hence
$
(a+b)^2=4.
$
After restricting $\mathcal{U}$ further, if necessary, we may assume that either
$$
a+b=2\quad \text{or}\quad
a+b=-2.
$$
In both cases $a+b$ is locally constant.  Therefore
we obtain
$$(a+b)c=0,\qquad (a+b)d=0.
$$
Since $a+b=\pm2\neq0$, it follows that
$
c=d=0.
$

Suppose first that $a+b=2$.  Then we obtain
$$
\rho(e_1,e_1)=-4a,\qquad
\rho(e_2,e_2)=0,\qquad
\rho(e_1,e_2)=0.
$$
Thus, with respect to the frame $\{e_1,e_2,\xi\}$, the Ricci operator has the form
$$
Q=
\begin{pmatrix}
-4a&0&0\\
0&0&0\\
0&0&0
\end{pmatrix}.
$$

Similarly, suppose that $a+b=-2$.  Then we have
$$
\rho(e_1,e_1)=0,\qquad
\rho(e_2,e_2)=
-8-4a,\qquad
\rho(e_1,e_2)=0.
$$
Therefore, with respect to $\{e_1,e_2,\xi\}$,
$$
Q=
\begin{pmatrix}
0&0&0\\
0&-8-4a&0\\
0&0&0
\end{pmatrix}.
$$

From the above Ricci operators, we see that $\operatorname{rank}Q\le1$.  If $Q=0$, then the metric is flat.  If $\operatorname{rank}Q=1$,  by Proposition \ref{prop:3d-we-spectrum}, the metric is weakly Einstein but not Einstein.  This completes the proof.
\end{proof}
By Theorem~\ref{thm:contact-Qxi-rigidity}, we immediately  have the following corollary:
\begin{corollary}
\label{cor:no-contact-rank-two}
There is no contact metric manifold of dimension three satisfying
$$Q\xi=0,   \qquad   \operatorname{rank}Q=2. $$
\end{corollary}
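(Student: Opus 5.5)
This is immediate from Theorem~\ref{thm:contact-Qxi-rigidity}, so I would argue by contradiction. Suppose $M=(M^3,\varphi,\xi,\eta,g)$ is a contact metric three-manifold with $Q\xi=0$ and $\operatorname{rank}Q=2$, either at some point or on all of $M$. Since every contact metric manifold is in particular an almost contact metric manifold satisfying the contact condition \eqref{eq:contact}, Theorem~\ref{thm:contact-Qxi-rigidity} applies.

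Theorem~\ref{thm:contact-Qxi-rigidity} gives $\operatorname{rank}Q\le 1$. This holds at each point of any open subset $\mathcal U$ on which an $h$-adapted $\varphi$-orthonormal frame exists. The only point I would check is that this pointwise bound covers all of $M$. For this I would recall how the local argument works. On $\mathcal U$ it splits $\mathcal U$ into the open pieces where $a+b=2$ or $a+b=-2$. Since $(a+b)^2=4$ and $a+b$ is continuous, these pieces are the components of $\mathcal U$, and on each piece it computes $Q$ explicitly as a diagonal matrix with at most one nonzero entry.

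If adapted frames fail to exist near some points (for instance where $h$ has repeated eigenvalues), I would cover such points by a density and continuity argument. The set $\{\operatorname{rank}Q\ge 2\}$ is open, because it is defined by the nonvanishing of some $2\times 2$ minor of $Q$. Hence if it were nonempty, it would contain an open set on which the frame argument applies, and the bound there contradicts $\operatorname{rank}Q=2$.

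Therefore $\operatorname{rank}Q\le1$ everywhere, contradicting $\operatorname{rank}Q=2$. I expect no real obstacle: the only subtlety is the openness of the rank-two locus, which reduces the pointwise statement to the local computation in the proof of Theorem~\ref{thm:contact-Qxi-rigidity}.
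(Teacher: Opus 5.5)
Your proposal is correct and matches the paper, which reads the corollary off directly from Theorem~\ref{thm:contact-Qxi-rigidity}: that theorem gives $\operatorname{rank}Q\le 1$ on any open set where $Q\xi=0$, so rank two is impossible. The density detour is not needed, since $Q\xi=0$ forces $\rho(\xi,\xi)=2-\operatorname{tr}h^2=0$, so $h$ has the simple eigenvalues $\pm1$ on $D$ at every point and an $h$-adapted frame exists near every point. What you should state explicitly is that $Q\xi=0$ must hold on an open set and not merely at a point, because the theorem's proof differentiates $a+b$.
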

For comparison, we recall the classification of weakly $\eta$-Einstein contact metric three-manifolds obtained in \cite[Theorem~3.4]{CE25}. This classification provides an independent consistency check for Theorem~\ref{thm:contact-Qxi-rigidity} and clarifies the possible forms of the Ricci operator under the condition $Q\xi=0$.
\begin{proposition}
[Theorem~3.4 in \cite{CE25}] \label{prop:contact-weakly-eta-einstein}
Let $M=(M,\varphi,\xi,\eta,g)$ be a contact metric three-manifold. If $M$ is weakly $\eta$-Einstein, then $M$ is one of the
following:
\begin{enumerate}
\item[\rm(i)]  it is Sasakian;
\item[\rm(ii)]  it is flat;
\item[\rm(iii)] it is a space of constant $\varphi$-sectional curvature
$\mu^2-1$ and $\xi$-sectional curvature $1-\mu^2$, where 
$\mu$ is the eigenvalue function of 
$h=\frac12\mathcal L_\xi\varphi
\text{ on } D=\ker\eta$;

\item [\rm(iv)] it has constant $\varphi$-sectional curvature $-2\nu$, and its
$\xi$-sectional curvature depends on the horizontal direction. More
precisely, with respect to a local $\varphi$-orthonormal frame
$\{e_1,e_2=\varphi e_1,\xi\}$,
$$
K(e_1,\xi)=2\nu,\qquad K(e_2,\xi)=-2\nu,
$$
and, for any unit vector field
$
X=\cos\theta\,e_1+\sin\theta\,e_2\in D,
$
one has
$
K(X,\xi)=2\nu\cos 2\theta.
$
Hence $K(X,\xi)$ ranges between $-2\nu$ and $2\nu$. Here
$
\nu=g(\nabla_\xi e_2,e_1)
$
is the connection coefficient of the chosen local $\varphi$-orthonormal
frame.
\end{enumerate}
\end{proposition}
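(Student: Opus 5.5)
This is Proposition \ref{prop:contact-weakly-eta-einstein}, quoted from \cite{CE25}. My plan is to derive it from the machinery already set up in the proof of Theorem~\ref{thm:contact-Qxi-rigidity}, combined with the pointwise trichotomy of Theorem~\ref{thm:trichotomy}.

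\textbf{Setup.} I work on an open set carrying a $\varphi$-orthonormal frame $\{e_1,e_2=\varphi e_1,\xi\}$ adapted to $h$, with brackets
$$[e_1,e_2]=2\xi+ce_1+de_2,\qquad [e_1,\xi]=ae_2,\qquad [e_2,\xi]=be_1.$$
I reuse the Ricci components computed there. Here $h$ has eigenvalues $\pm\mu$ with $\mu=(b-a)/2$, up to sign convention. The quantity $a+b$ is tied to $\nu=g(\nabla_\xi e_2,e_1)=(2+a-b)/2$, up to bookkeeping.

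\textbf{Case split.} Theorem~\ref{thm:trichotomy} splits the open dense set where the case type is locally constant into three regimes.

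\emph{(A) $\eta$-Einstein.} Here $Q$ has the form $\overline{\alpha}I+\overline{\beta}\eta\otimes\xi$. This requires
$$\rho(e_1,\xi)=\rho(e_2,\xi)=\rho(e_1,e_2)=0,\qquad \rho(e_1,e_1)=\rho(e_2,e_2).$$
The last equation gives $(a+b)(b-a)-2(a+b)=0$, that is, $(a+b)(b-a-2)=0$.
\begin{itemize}
\item[$\bullet$] If $a+b=0$, then $\rho(\xi,\xi)=2$. I expect $h=0$ to follow, giving case~(i), since a contact metric $3$-manifold with $h=0$ is Sasakian. Failing that, I would invoke the known fact that an $\eta$-Einstein contact metric $3$-manifold is Sasakian or has constant $\varphi$-sectional curvature. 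I need to check whether $a+b=0$ really forces $a=b=0$ through the frame conventions.
\item[$\bullet$] If $b-a=2$ with $a+b\neq0$, then $\mu$ is constant. The remaining equations $\xi(a+b)=0$ and $(a+b)c=-\tfrac12e_2(a+b)$, etc., should produce the curvature description in~(iii). Here I will compute $K(e_1,e_2)$ and $K(e_i,\xi)$ from the Ricci components via \eqref{eq:3d-curvature}.
\end{itemize}

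\emph{(B) $Q\xi=0$.} Theorem~\ref{thm:contact-Qxi-rigidity} forces $a+b=\pm2$, $c=d=0$, and $Q$ of rank at most one. Computing the sectional curvatures gives:
\begin{itemize}
\item[$\bullet$] $K(e_1,\xi)$ and $K(e_2,\xi)$ of opposite sign and equal magnitude;
\item[$\bullet$] $K(e_1,e_2)$ expressed in terms of $a$.
\end{itemize}
Writing these in terms of $\nu$ should give case~(iv), or flatness when $Q=0$, which is case~(ii). The formula $K(X,\xi)=2\nu\cos2\theta$ follows from the bilinearity of $R(\cdot,\xi,\xi,\cdot)$ when the mixed term $R(e_1,\xi,\xi,e_2)$ vanishes. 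That vanishing holds because $\rho(e_1,e_2)=0$ and $Q\xi=0$.

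\emph{(C) Weakly Einstein.} If the metric is Einstein, then in dimension three it has constant curvature, and a contact metric $3$-manifold of constant curvature is Sasakian ($K=1$) or flat. Otherwise Theorem~\ref{thm:rank-one-we} applies, and its subcases~(i) and~(ii) reduce to regimes~(B) and~(A).

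\textbf{Main obstacle.} I expect the hardest step to be the mixed subcase~(iii) of Theorem~\ref{thm:rank-one-we}, where $V=\sin\theta\,e_1+\cos\theta\,\xi$. There I must show this cannot occur for contact metrics, using the explicit Ricci formulas. Since $\rho(e_2,\cdot)=0$, we get $\rho(e_1,e_2)=0$, $\rho(e_2,e_2)=0$ and $\rho(e_2,\xi)=0$. Combined with
$$\rho(\xi,\xi)=2-\tfrac12(a+b)^2=\lambda\cos^2\theta,$$
$$\rho(e_1,e_1)-\rho(e_2,e_2)=-2(a+b)+(b^2-a^2)=\lambda\sin^2\theta,$$
and the rank-one relation $\rho(e_1,\xi)^2=\rho(e_1,e_1)\rho(\xi,\xi)$, I will try to derive a contradiction. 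If no contradiction appears, I will fall back on the full second Bianchi identity.

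In all cases, the analysis must take place on an open dense set and extend by continuity, which matches the local phrasing of the classification.
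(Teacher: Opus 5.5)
The paper gives no proof of this statement; it is quoted from \cite{CE25}. Your organisation (trichotomy, then Theorem~\ref{thm:contact-Qxi-rigidity} for $Q\xi=0$, then Theorem~\ref{thm:rank-one-we} for the weakly Einstein regime) correctly identifies where the proof hinges. However, the step you call the main obstacle cannot be carried out. The mixed configuration of Theorem~\ref{thm:rank-one-we}(iii) does occur on contact metric three-manifolds. So neither the rank-one relations nor the second Bianchi identity can give a contradiction, and the list (i)--(iv) is in fact incomplete.

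Here is an example. On $I\times\mathbb R^2$ with coordinates $(t,x,y)$, let $p(t),q(t)$ be a local solution of
\[
p'=p^2+\tfrac12(q+2)^2,\qquad q'=2pq+(q+2)\sqrt{2q(2-q)},\qquad 0<q<2 .
\]
Set $e_1=\partial_t$. Let $e_2,\xi$ be combinations of $\partial_x,\partial_y$ whose $t$-dependent coefficient vectors satisfy $e_2'=pe_2+2\xi$, $\xi'=qe_2$, with $e_2(t_0)=\partial_x$ and $\xi(t_0)=\partial_y$. Declaring $\{e_1,e_2=\varphi e_1,\xi\}$ orthonormal gives
\[
[e_1,e_2]=2\xi+pe_2,\qquad [e_1,\xi]=qe_2,\qquad [e_2,\xi]=0 .
\]
This is exactly the contact metric normal form used in the proof of Theorem~\ref{thm:contact-Qxi-rigidity}, with $(a,b,c,d)=(q,0,0,p)$ and $he_1=-\tfrac q2 e_1$. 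The Ricci formulas listed there give
\[
\rho(e_1,\cdot)=0,\qquad \rho(e_2,e_2)=q(q+2),\qquad \rho(\xi,\xi)=\tfrac12(4-q^2),
\]
\[
\rho(e_2,\xi)=\tfrac12q'-pq=(q+2)\sqrt{q(2-q)/2}.
\]
Hence $\rho(e_2,e_2)\rho(\xi,\xi)=\rho(e_2,\xi)^2$, and $Q=\lambda g(\cdot,V)V$ with $\lambda=\tfrac12(q+2)^2$ and $V$ proportional to $\sqrt q\,e_2+\sqrt{(2-q)/2}\,\xi$.

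This metric is therefore weakly Einstein, hence weakly $\eta$-Einstein. It lies in none of the four cases:
\begin{itemize}
\item $h\neq0$ and $\lambda\neq0$, which excludes (i) and (ii);
\item $\rho(\xi,\xi)\neq0$, which excludes (iv);
\item $K(e_1,\xi)-K(e_2,\xi)=\rho(e_1,e_1)-\rho(e_2,e_2)=-q(q+2)\neq0$, which excludes (iii).
\end{itemize}
After relabelling $e_1\mapsto e_2$, $e_2\mapsto -e_1$, the example sits exactly in your configuration $\rho(e_2,\cdot)=0$. So the contradiction you were looking for does not exist there, and the Bianchi fallback cannot help either, since this is an honest Riemannian metric.

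There are also three smaller errors.
\begin{itemize}
\item Your identification of $\mu$ is off. Since $2he_1=[\xi,e_2]-\varphi[\xi,e_1]=-(a+b)e_1$, one has $\mu=-(a+b)/2$, while $\nu=(2+a-b)/2$.
\item Consequently, in regime (A) the branch $a+b=0$ is Sasakian at once. The branch $b-a=2$ means $\nu=0$, not that $\mu$ is constant, and constancy of $\mu$ must be proved there. One way: the contracted Bianchi identity for $Q=\overline{\alpha}I+\overline{\beta}\,\eta\otimes\xi$ forces $\rho(\xi,\xi)=2-2\mu^2$ to be locally constant, and then $c=d=0$ and $\overline{\alpha}=0$.
\item In regime (B), the sign pattern of (iv) appears directly only for $a+b=-2$. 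For $a+b=2$ you get $K(e_1,\xi)=-2\nu$ and $K(e_2,\xi)=2\nu$, and must relabel the frame as above; this relabelling preserves $\nu$.
\end{itemize}
Finally, for a general mixed $V$ the horizontal part of $V$ need not lie along an $h$-eigenvector. Reducing to $\rho(e_2,\cdot)=0$ in the $h$-adapted frame was therefore not justified in any case.
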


\begin{remark}\label{rem:comparison-prop55}
Proposition~\ref{prop:contact-weakly-eta-einstein} is compatible with 
Theorem~\ref{thm:contact-Qxi-rigidity}.  Indeed, in the Sasakian case one has
$Q\xi=2\xi$, and hence the condition $Q\xi=0$ cannot occur.  In the
flat case, $Q=0$.  In the third case,
the Ricci operator satisfies
$$
Qe_1=0,\qquad Qe_2=0,\qquad Q\xi=2(1-\mu^2)\xi .
$$
Thus $Q\xi=0$ implies $\mu^2=1$, and consequently $Q=0$.  In the
fourth case, one obtains
$$
Qe_1=0,\qquad Qe_2=-4\nu e_2,\qquad Q\xi=0,
$$
so that $\operatorname{rank}Q\le 1$.

Therefore, among the weakly $\eta$-Einstein contact metric
three-manifolds listed in Proposition~\ref{prop:contact-weakly-eta-einstein},
there is no case satisfying $Q\xi=0$ and $\operatorname{rank}Q=2$.
This is consistent with Corollary~\ref{cor:no-contact-rank-two}.
\end{remark}
\section{Examples}\label{sec:examples}
In this section, we give examples of almost contact metric three-manifolds satisfying  $Q\xi=0$ (with $\operatorname{rank}Q=1$ or $\operatorname{rank}Q=2$).

\begin{example}[The case $Q\xi=0$ and $\operatorname{rank}Q=1$]
\label{ex:rank-one-qxi-zero}
Let
$$
  M=(0,\infty)\times\mathbb R^2
$$
with coordinates $(t,x,y)$. We regard $M$ as a warped product with
one-dimensional base $(0,\infty)$ and two-dimensional flat fiber
$\mathbb R^2$. Let
$$
  g=dt^2+f(t)^2(dx^2+dy^2),\qquad f(t)=\sqrt t.
$$
With respect to this metric, we take
$$
  e_1=\frac{\partial}{\partial t},\qquad
  e_2=\frac1{\sqrt t}\frac{\partial}{\partial x},\qquad
  \xi=\frac1{\sqrt t}\frac{\partial}{\partial y}.
$$
Then $\{e_1,e_2,\xi\}$ is a local orthonormal frame. Put
$
  \eta=g(\xi,\cdot)=\sqrt t\,dy
$ and define a $(1,1)$-tensor field
$\varphi$ by
$$
  \varphi e_1=e_2, \qquad
  \varphi e_2=-e_1,  \qquad
  \varphi\xi=0.
$$
Then $(\varphi,\xi,\eta,g)$ is an almost contact metric structure. But  it is not a contact metric structure. Indeed,  
$$ d\eta=\frac{1}{4\sqrt{t}}\,dt\wedge dy, $$ 
and hence $ d\eta(e_1,e_2)=0. $ On the other hand, $ g(e_1,\varphi e_2)= -1. $ Therefore, $$ d\eta(e_1,e_2)\neq g(e_1,\varphi e_2). $$ 
For this warped product, the standard Ricci curvature formulas give
$$
  \rho(e_1,e_1)
  =
  -2\frac{f''}{f}
  =
  \frac1{2t^2},\qquad  \rho(e_2,e_2)=\rho(\xi,\xi)
  =
  -\frac{ff''+(f')^2}{f^2}
  =
  0,
$$
all others being zero with respect to
$\{e_1,e_2,\xi\}$. Hence the Ricci operator is diagonal with
respect to this frame, that is,
$$
  Qe_1=\frac1{2t^2}e_1,\qquad
  Qe_2=0,\qquad
  Q\xi=0.
$$
Consequently, $Q\xi=0$ and $\operatorname{rank}Q=1.$ From Proposition~\ref{prop:3d-we-spectrum}, the metric is weakly Einstein.
\end{example}

\begin{example}[The case $Q\xi=0$ and $\operatorname{rank}Q=2$]
\label{ex:rank-two-qxi-zero}
Let $M=\mathbb R^3$ with coordinates $(t,x,y)$, and let
$$
  g=dt^2+e^{2t}dx^2+(1+e^{-t})^2dy^2.
$$
With respect to this metric, we take
$$
  e_1=\frac{\partial}{\partial t},\qquad
  e_2=e^{-t}\frac{\partial}{\partial x},\qquad
  \xi=\frac1{1+e^{-t}}\frac{\partial}{\partial y}.
$$
Then $\{e_1,e_2,\xi\}$ is a local orthonormal frame. We put
$$
  \eta=g(\xi,\cdot)=(1+e^{-t})dy,
$$
and define a $(1,1)$-tensor field
$\varphi$ by
$$
  \varphi e_1=e_2,\qquad
  \varphi e_2=-e_1,\qquad
  \varphi\xi=0.
$$
Then $(\varphi,\xi,\eta,g)$ is an almost contact metric structure. But it is not a contact metric structure. Indeed, 
$$d\eta=-\frac12 e^{-t}\,dt\wedge dy.$$
and hence $ d\eta(e_1,e_2)=0. $ On the other hand, $ g(e_1,\varphi e_2)= -1. $ Therefore, $$ d\eta(e_1,e_2)\neq g(e_1,\varphi e_2). $$
By a direct computation, we obtain the Ricci  operator with respect to $\{e_1,e_2,\xi\}$:
$$
  Qe_1=-\frac{1+2e^{-t}}{1+e^{-t}}e_1,\qquad
  Qe_2=-\frac1{1+e^{-t}}e_2,\qquad
  Q\xi=0.
$$
This yields $Q\xi=0$ and $\operatorname{rank}Q=2$. Moreover, the eigenvalues of $Q$ with respect to
$\{e_1,e_2,\xi\}$ are
$$
  \lambda_1=-\frac{1+2e^{-t}}{1+e^{-t}},\qquad
  \lambda_2=-\frac1{1+e^{-t}},\qquad
  \lambda_3=0.
$$
By \eqref{eq:breve-difference}, taking $i=1$, $j=2$, and $k=3$,
we obtain
$
  \breve R(e_1,e_1)=\breve R(e_2,e_2).
$
Since $Q$ is diagonal with respect to
$\{e_1,e_2,\xi\}$, formula  \eqref{eq:3-d_breve}  shows that all off-diagonal
components of $\breve R$ vanish.
Hence $\breve R$ has the form
$
  \breve R=\alpha g+\beta\,\eta\otimes\eta
$
for suitable smooth functions $\alpha$ and $\beta$.  Thus the almost
contact metric manifold is weakly $\eta$-Einstein. Since $\lambda_1\ne\lambda_2$, it is neither $\eta$-Einstein nor  weakly Einstein. 

\end{example}

These examples show that, for almost contact metric three-manifolds,
the condition $Q\xi=0$ does not imply $\operatorname{rank}Q\leq1$. This means  that the contact metric assumption in Theorem~\ref{thm:contact-Qxi-rigidity} is essential. Without this assumption, the conclusion  no longer holds. 

\section{Weakly Einstein homogeneous contact metric three-manifolds}
\label{sec:homogeneous}
In this section, we determine the simply connected homogeneous contact
metric three-manifolds whose metrics are weakly Einstein. Perrone's
description of homogeneous contact metric three-manifolds shows that
each such manifold is a Lie group endowed with a left-invariant contact
metric structure ~\cite{Perrone1998}. The corresponding Lie algebras are divided into the unimodular and nonunimodular cases.

\begin{theorem}\label{thm:homogeneous-classification}
Let $(M^3,\varphi,\xi,\eta,g)$ be a simply connected homogeneous contact
metric  three-manifold.  Then $M$ is weakly Einstein if and
only if $M$ is unimodular and, with respect to a suitable orthonormal
$\varphi$-basis
$$
  \{e_1,e_2=\varphi e_1,e_3=\xi\},
$$
the Lie algebra is given by
$$
  [e_1,e_2]=2e_3,\qquad
  [e_2,e_3]=ae_1,\qquad
  [e_3,e_1]=be_2,
$$
where the structure constants $a,b$ satisfy
$$
  a+b=2,\qquad
  a-b=\pm2,\qquad
  \text{or}\qquad
  (a,b)=(2,2).
$$
Consequently, every weakly Einstein homogeneous contact metric three-manifold is locally isometric to one of the unimodular Lie groups 
$SU(2)$, $\widetilde{SL}(2,\mathbb R)$, $\widetilde{E}(2)$, $E(1,1)$  with a left-invariant contact metric structure.

\end{theorem}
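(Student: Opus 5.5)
The plan is to use Perrone's description \cite{Perrone1998}: a simply connected homogeneous contact metric three-manifold is a Lie group $G$ with a left-invariant contact metric structure. We then split into the unimodular and nonunimodular cases. In each case we compute the Ricci operator with respect to the $\varphi$-basis $\{e_1,e_2=\varphi e_1,e_3=\xi\}$ and test the criterion of Proposition~\ref{prop:3d-we-spectrum}: $Q$ must be either a scalar multiple of the identity or of rank one.

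\textbf{Unimodular case.} By Perrone, one can choose the $\varphi$-basis so that $[e_1,e_2]=2e_3$, $[e_2,e_3]=ae_1$, $[e_3,e_1]=be_2$. This is a Milnor frame with structure constants $(\lambda_1,\lambda_2,\lambda_3)=(a,b,2)$. Put
$$
\mu_1=\tfrac{b+2-a}{2},\qquad \mu_2=\tfrac{a+2-b}{2},\qquad \mu_3=\tfrac{a+b-2}{2}.
$$
Milnor's formulas then give a diagonal Ricci operator with
$$
\rho(e_1,e_1)=2\mu_2\mu_3,\qquad \rho(e_2,e_2)=2\mu_1\mu_3,\qquad \rho(\xi,\xi)=2\mu_1\mu_2.
$$
We expect to confirm $\rho(\xi,\xi)=2-\tfrac12(a-b)^2$, which is consistent with the formulas of Section~\ref{sec:contact} after the sign change $[e_1,\xi]=-be_2$. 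We then analyze the three possibilities.
\begin{itemize}
\item[(1)] Rank at most one. At least two of the three products vanish, which forces some $\mu_i=0$. This gives $a+b=2$ (from $\mu_3=0$) or $a-b=\pm2$ (from $\mu_1=0$ or $\mu_2=0$). Flat structures, where all three products vanish, are Einstein and are included in these families.
\item[(2)] Einstein and non-flat. All three products are equal and nonzero, hence $\mu_1=\mu_2=\mu_3\neq 0$. This gives $a=b=2$, the Sasakian round $SU(2)$.
\end{itemize}
Conversely, each listed parameter value yields $Q$ that is either a scalar multiple of the identity or of rank at most one. Finally, Milnor's sign table for $(a,b,2)$ identifies the groups: $SU(2)$, $\widetilde{SL}(2,\mathbb R)$, $\widetilde E(2)$, $E(1,1)$. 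The nilpotent and abelian cases do not occur, since $\lambda_3=2\neq0$ and one checks the remaining admissible signs.

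\textbf{Nonunimodular case.} Perrone's normal form is, up to notation,
$$
[e_1,e_2]=\alpha e_2+2\xi,\qquad [e_1,\xi]=\gamma e_2,\qquad [e_2,\xi]=0,\qquad \alpha\neq0.
$$
The plan is as follows.
\begin{itemize}
\item[(1)] Compute $\nabla$ by the Koszul formula and then the full Ricci matrix. Here $Q$ need not be diagonal in the $\varphi$-basis.
\item[(2)] Rule out rank one by showing that some $2\times2$ minor of $Q$ is nonzero, using $\alpha\neq0$. Rule out the Einstein case by showing $Q$ is not a scalar multiple of the identity; for instance, the trace-free part has a nonzero entry proportional to $\alpha$. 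A useful first check is $\rho(\xi,\xi)=2-\tfrac12\gamma^2$ together with the horizontal entries, which involve $-\alpha^2$.
\item[(3)] Equivalently, one may argue via Theorem~\ref{thm:rank-one-we}: a rank-one $Q$ would require the kernel of $Q$ to be two-dimensional. A determinant or $2\times2$-minor computation should show that this forces $\alpha=0$, a contradiction.
\end{itemize}

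The main obstacle I expect is this nonunimodular computation. The Ricci matrix has mixed $D$--$\xi$ entries, so one cannot simply read off eigenvalues. The cleanest route is probably to compute all $2\times2$ minors of $Q$ and show that they cannot vanish simultaneously when $\alpha\neq0$. The non-Einstein claim then follows from the inequality between two diagonal entries.
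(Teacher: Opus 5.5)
Your route is the paper's: Perrone's reduction, the unimodular/nonunimodular split, and the test of Proposition~\ref{prop:3d-we-spectrum}. The unimodular half is complete and correct. Your Milnor products $2\mu_2\mu_3$, $2\mu_1\mu_3$, $2\mu_1\mu_2$ are exactly the eigenvalues \eqref{eq:uni-eignevalues}. Your criteria ``$\operatorname{rank}Q\le 1$ iff some $\mu_i=0$'' and ``non-flat Einstein iff $\mu_1=\mu_2=\mu_3\neq0$'' are a slightly tidier form of the paper's zero-set bookkeeping, and they give the same answer.

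The nonunimodular half, however, is still only a plan, and the part meant to exclude the Einstein case does not work as stated. Carrying out the Koszul computation in your normalization gives $\rho(e_1,e_2)=\rho(e_1,\xi)=0$ and
\[
\begin{aligned}
&\rho(e_1,e_1)=-\alpha^2-2-2\gamma-\tfrac{\gamma^2}{2},\qquad
\rho(e_2,e_2)=-\alpha^2-2+\tfrac{\gamma^2}{2},\\
&\rho(\xi,\xi)=2-\tfrac{\gamma^2}{2},\qquad
\rho(e_2,\xi)=-\alpha\gamma .
\end{aligned}
\]
So $Q$ preserves $\operatorname{span}\{e_2,\xi\}$, and the determinant of that block is
\[
-\tfrac14\bigl[(\gamma^2-4)^2+2\alpha^2(\gamma^2+4)\bigr]<0\qquad(\alpha\neq0).
\]
This is the paper's $\det A<0$ with $c=\alpha$, $d=-\gamma$, and it is the fact your argument still needs.

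Your Einstein heuristics are unreliable. The only entry proportional to $\alpha$ is $-\alpha\gamma$, which vanishes when $\gamma=0$. Also, no fixed pair of diagonal entries is always distinct: the differences $-\gamma(\gamma+2)$, $-\alpha^2-4-2\gamma$ and $\gamma^2-\alpha^2-4$ each vanish for suitable $\gamma$. You have two ways to close this:
\begin{enumerate}
\item Argue as the paper does: Einstein forces $\alpha\gamma=0$, so $\gamma=0$ and $Q=\operatorname{diag}(-\alpha^2-2,-\alpha^2-2,2)$, which is impossible.
\item Note that $Q=\lambda I$ would make the block determinant $\lambda^2\ge0$. The single inequality above then rules out both rank one and the Einstein case at once.
\end{enumerate}
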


\begin{proof}
By Perrone's theorem~\cite{Perrone1998}, every simply connected
homogeneous contact metric three-manifold is a Lie group endowed with a
left-invariant contact metric structure.  The corresponding Lie groups
are either unimodular or nonunimodular.

 We first consider the unimodular case. Let  $G$  be a simply connected
unimodular Lie group endowed with a left-invariant contact metric
structure. Then, with respect to a suitable orthonormal
$\varphi$-basis $\{e_1,e_2=\varphi e_1,e_3=\xi\}$, the Lie algebra is given by
$$ [e_1,e_2]=2e_3,\qquad  [e_2,e_3]=ae_1,\qquad  [e_3,e_1]=be_2.$$
For this Lie algebra, the Ricci operator is diagonal with respect to the
same basis \cite{CE23}:
$$  Qe_1=\lambda_1e_1,\qquad  Qe_2=\lambda_2e_2,\qquad  Qe_3=\lambda_3e_3, $$
where
\begin{equation}\label{eq:uni-eignevalues}
    \begin{aligned}
        \lambda_1 &  =  \frac{(a-b+2)(a+b-2)}2,\\
         \lambda_2 & =  -\frac{(a-b-2)(a+b-2)}2,\\
           \lambda_3 &  =  -\frac{(a-b-2)(a-b+2)}2.
    \end{aligned}
\end{equation} 
By Proposition~\ref{prop:3d-we-spectrum}, a three-dimensional
Riemannian metric is weakly Einstein if and only if either it is
Einstein or its Ricci operator $Q$ has rank one. We therefore divide the
analysis into two parts: the Einstein case and the case in which $Q$ has
rank one.

We first determine when $Q$ has rank one.  Since $Q$ is diagonal, this
means that exactly two of $\lambda_1,\lambda_2,\lambda_3$ vanish.  The
zero sets are
$$
\begin{aligned}
  \lambda_1=0
  &\quad\Longleftrightarrow\quad
  a-b=-2\quad\text{or}\quad a+b=2,\\
  \lambda_2=0
  &\quad\Longleftrightarrow\quad
  a-b=2\quad\text{or}\quad a+b=2,\\
  \lambda_3=0
  &\quad\Longleftrightarrow\quad
  a-b=\pm2.
\end{aligned}
$$
Thus $\lambda_1=\lambda_2=0$ is equivalent to
$a+b=2.$
Similarly,
\begin{equation*}
    \begin{aligned}
    \lambda_1=\lambda_3=0&
  \quad\Longleftrightarrow\quad
  a-b=-2
  \quad\text{or}\quad
  (a,b)=(2,0), \\
  \lambda_2=\lambda_3=0&
  \quad\Longleftrightarrow\quad
  a-b=2
  \quad\text{or}\quad
  (a,b)=(0,2).
\end{aligned}
\end{equation*}
At the two exceptional points $(a,b)=(2,0)$ and $(a,b)=(0,2)$,
all three eigenvalues vanish.  Thus $Q=0$ at these points, and the
corresponding metrics are flat.  Therefore the Ricci
operator has rank one precisely when
$$  a+b=2  \quad\text{or}\quad  a-b=\pm2,$$
excluding  $(a,b)=(2,0)$ and
$(a,b)=(0,2)$.

It remains to determine the Einstein cases. Solving
$\lambda_1=\lambda_2=\lambda_3$, we first obtain from
$\lambda_1=\lambda_2$ that
$$ (a+b-2)(a-b)=0.$$
Hence either $a+b=2$ or $a=b$.  

If $a+b=2$, then
$\lambda_1=\lambda_2=0$. For the metric to be
Einstein, one must also have
$\lambda_3=0$.  This gives $a-b=\pm2$, and hence
$$  (a,b)=(2,0)\quad\text{or}\quad (a,b)=(0,2).$$
At both points, $Q=0$, so the corresponding metrics are  flat.

If $a=b$, then
$$
  \lambda_1=\lambda_2=2a-2,\qquad
  \lambda_3=2.
$$
Thus the metric is Einstein if and only if $2a-2=2$, namely $a=2$.  Since
$a=b$, we obtain
$$
  (a,b)=(2,2).
$$
This gives the non-flat Einstein case.

Consequently, in the unimodular homogeneous contact metric case, the
weakly Einstein condition is equivalent to
$$
  a+b=2,\qquad
  a-b=\pm2,\qquad
  \text{or}\qquad
  (a,b)=(2,2).
$$

We next consider the nonunimodular case. Then, with respect to a suitable $\varphi$-basis $\{e_1,e_2=\varphi e_1,e_3=\xi\}$,  the Lie algebra is given by
$$  [e_1,e_2]=ce_2+2e_3,\qquad  [e_2,e_3]=0,\qquad  [e_3,e_1]=de_2,\qquad  c\neq0.$$
The Ricci operator takes the form
$$
  Qe_1=\mu_1e_1,\qquad
  Qe_2=\mu_2e_2+cde_3,\qquad
  Qe_3=cde_2+\mu_3e_3,
$$
where
$$
  \mu_1=-c^2-2+2d-\frac{d^2}{2},\qquad
  \mu_2=-c^2-2+\frac{d^2}{2},\qquad
  \mu_3=2-\frac{d^2}{2}.
$$
Since $Qe_2,Qe_3\in \operatorname{span}\{e_2,e_3\}$, the subspace
$\operatorname{span}\{e_2,e_3\}$ is $Q$-invariant.  Hence the restriction
of $Q$ to this subspace is represented by
$$
  A=
  \begin{pmatrix}
  \mu_2 & cd \\
  cd & \mu_3
  \end{pmatrix}.
$$
Its determinant is
$$
\begin{aligned}
  \det A
  &=
  -\frac{(d^2-4)^2+2c^2(d^2+4)}{4}.
\end{aligned}
$$
Since $c\neq0$, we have
$
  (d^2-4)^2+2c^2(d^2+4)>0.
$
Thus $\det A<0$, and in particular $A$ is nonsingular.  Hence
$
  \operatorname{rank}A=2,
$
and therefore
$$
  \operatorname{rank}Q\ge 2.
$$
By Proposition~\ref{prop:3d-we-spectrum}, the non-Einstein weakly Einstein case cannot occur in the
nonunimodular case.

Now suppose that the metric is
Einstein. Then the
off-diagonal entry of the matrix $A$ must vanish, which gives $cd=0$.
Since $c\neq0$, this implies $d=0$.  Substituting $d=0$ into the Ricci
operator gives
$$   Qe_1=(-c^2-2)e_1,\qquad Qe_2=(-c^2-2)e_2,\qquad  Qe_3=2e_3.$$
Since the metric is Einstein, $c^2=-4$. This is a contradiction. Therefore no simply connected nonunimodular homogeneous contact metric
three-manifold has a weakly Einstein metric. 
\end{proof}

\begin{remark}\label{rem:homogeneous-table}
The weakly Einstein cases in Theorem~\ref{thm:homogeneous-classification}
can be described in terms of the Ricci operator $Q$. In the unimodular
case, the Ricci eigenvalues are given by \eqref{eq:uni-eignevalues}.
Thus the result can be summarized as follows:
\renewcommand{\arraystretch}{1.15}
\begin{center}
\begin{tabular}{
  @{}
  L{0.30\textwidth}
  C{0.18\textwidth}
  C{0.27\textwidth}
  C{0.18\textwidth}
  @{}
}
\toprule
Condition on $(a,b)$
&
\makecell[c]{Ricci\\ operator}
&
\makecell[c]{Geometric\\ description}
&
Lie groups
\\
\midrule
\midrule

$(a,b)=(2,2)$
&
$Q=2I$
&
non-flat Einstein
&
$SU(2)$
\\
\midrule

$(a,b)=(2,0)$ or $(0,2)$
&
$Q=0$
&
flat
&
$\widetilde{E}(2)$
\\
\midrule

\makecell[l]{
  $a+b=2,$\\[0.8ex]
  $(a,b)\notin\{(2,0),(0,2)\}$
}
&
$\operatorname{rank}Q=1$
&
\makecell[c]{
  weakly Einstein,\\[0.8ex]
  not Einstein
}
&
\makecell[c]{
  $SU(2),$\\[0.8ex]
  $\widetilde{\operatorname{SL}}(2,\mathbb{R})$
}
\\
\midrule

\makecell[l]{
  $a-b=\pm2,$\\[0.8ex]
  $(a,b)\notin\{(2,0),(0,2)\}$
}
&
$\operatorname{rank}Q=1$
&
\makecell[c]{
  weakly Einstein,\\[0.8ex]
  not Einstein
}
&
\makecell[c]{
  $SU(2),$\\[0.8ex]
  $\widetilde{\operatorname{SL}}(2,\mathbb{R}),$\\[0.8ex]
  $E(1,1)$
}
\\
\bottomrule
\end{tabular}
\end{center}
\end{remark}

\begin{remark}
The Heisenberg group $H_3$ with its standard Sasakian
structure is not weakly Einstein. Indeed, in the notation of
Theorem~\ref{thm:homogeneous-classification}, it corresponds to
$(a,b)=(0,0)$. Hence
$$
(\lambda_1,\lambda_2,\lambda_3)=(-2,-2,2).
$$
Thus $Q$ is neither a scalar multiple of the identity nor of rank one.
By Proposition~\ref{prop:3d-we-spectrum}, the metric is not weakly
Einstein.
\end{remark}

\section*{Acknowledgements}

The authors would like to express their sincere gratitude to Professor Jong Taek Cho for his insightful comments and valuable suggestions, which helped to improve this work.

\end{document}